\newcommand{\sQ}{\mathcal{Q}}
\newcommand{\sH}{\mathcal{H}}
\newcommand{\sP}{\mathcal{P}}
\newcommand{\sV}{\mathcal{V}}
\newcommand{\comp}{\circ}
\theoremstyle{plain}
\newtheorem{thm}{Theorem}
\newtheorem{cor}[thm]{Corollary}
\newtheorem{lem}[thm]{Lemma}
\theoremstyle{definition}
\theoremstyle{remark}
\let\a\alpha
\let\b\beta
\let\g\gamma
\let\e\varepsilon
\let\ra\rightarrow
\let\ti\tilde
\begin{document}
\title{Connections and Parallel Transport}

\date{\today}
\author{Florin Dumitrescu}
\maketitle
%\tableofcontents

\begin{abstract} In this short note we give an elementary proof  of the fact that connections and their geometric parallel-transport counterpart are equivalent notions. 
\end{abstract}

\vspace{.2in}

It is well known that a connection (a.k.a. covariant derivative) on a vector bundle over a manifold gives rise to a parametrization-independent parallel transport along (piecewise) smooth paths in the manifold. A converse of this result appears in the book of Walter Poor \cite{Poor}. The definition of parallel transport used there is unnecessarily elaborate. A similar notion, called path-connection, was introduced by Mackenzie in \cite{Mac}, which characterizes connections by means of path-lifting. In this note we adopt a more natural definition of parallel transport better suited to the ``field theory" paradigm and show how parallel transport gives rise to connections. The equivalence of connections and parallel transport for line bundles is proved by Freed in the appendix B of \cite{FrCS1}. In \cite{Ba} Barrett and in a sightly different approach Caetano-Picken in \cite{CP} show that holonomy characterizes bundles with connections up to isomorphism. This result was then extended for abelian gerbes with connection by Mackaay and Picken in \cite{MP}. In \cite{SW1} Schreiber and Waldorf introduce a categorical notion of parallel transport, called transport functor, and describe bundles with connections as transport functors by an equivalence of categories. Their language of transport functors is suitable for higher notions of parallel transport, see \cite{SW2}. Another (unpublished) proof of the equivalence of connections and parallel transport  is due to Stolz. In a forthcoming paper with Stephan Stolz we will show that vector bundles with connections over a space $M$ describe 1-dimensional topological field theories over $M$. 

The applications of this theorem are quite surprising of which we will speak elsewhere (see \cite{D3}). The result is formulated for connections on principal bundles but the reader can think of vector bundles instead if she prefers.

Let $Q$ be a principal $G$-bundle over a manifold $M$, with the fiber $G$ a Lie group. 
Recall that a connection on $Q$ is a $G$-invariant distribution on $Q$ that is complementary to the canonical vertical distribution determined by the tangent spaces along the fibers.

Denote by $\sP(M)$ the space of (piecewise) smooth paths in $M$. This is an infinite dimensional smooth manifold, see for example \cite{Kli}. For a path $\g$ in $M$, let $i(\g)$ be the starting point of $\g$ and $e(\g)$ the ending point of $\g$. Consider the $G$-bundle  $QQ$ over $M\times M$, whose fiber at $(x,y)\in M\times M$ is the space of $G$-equivariant maps from $Q_x$ to $Q_y$. Consider also the pullback bundle $\sQ$ of the bundle $QQ$ over $M\times M$ via the map 
\[ \sP(M)\longrightarrow M\times M: \g\mapsto (i(\g), e(\g)). \]
We define a \textbf{parallel transport map associated to the bundle $Q$ over $M$} to be a smooth section $P$ of the bundle $\sQ$ over $\sP(M)$ that is the identity over constant paths, it is invariant under the action of the diffeomorphism group of intervals and is compatible with the diagram:

\[  \xymatrix{ \sQ\times_M\sQ \ar[r] \ar[d]
& \sQ \ar[d] \\
\sP(M)\times_M\sP(M) \ar[r] & \sP(M).}
\]
Here $\sP(M)\times_M\sP(M)$ is the space of pairs of paths $(\g_1, \g_2)$ such that $e(\g_1)=i(\g_2)$ and the horizontal lower map is the juxtaposition-of-paths map. The product $\sQ\times_M\sQ$ is defined in the expected way and the horizontal upper map is the composition map. A section $P$ of the bundle $\sQ$ is called {\it smooth} if for each family of paths parametrized by some (finite dimensional) manifold $S$, the section $P$ associates a family of smooth $G$-equivariant maps parametrized by $S$; compare \cite{Ba}, Section 2.1.1.

In other words, a parallel transport map $P$ associates smoothly (in families) to each path $\g$ in $M$ a $G$-map $P(\g):Q_{i(\g)}\ra Q_{e(\g)}$ such that:
\begin{enumerate}
\item $P(\g_x)=1_{Q_x}$, where $\g_x$ is a constant map at $x\in M$.
\item (Invariance under reparametrization) $P(\g\comp\a)=P(\g)$, where $\a$ is an orientation-preserving diffeomorphism of intervals.
\item (Compatibility under juxtaposition) $P(\g_2\star\g_1)= P(\g_2)\comp P(\g_1)$, where $\g_2\star\g_1$ is the juxtaposition of the paths $\g_1$ and $\g_2$.
\end{enumerate}
Then we prove:

\begin{thm} There is a natural 1-1 correspondence: 

\[  \left\{
\begin{array}{l}
\text{ Connections  }\\
\text{on $Q$ over $M$}
\end{array} \right\}
\leftrightarrow \left\{
\begin{array}{l}
\text{  Parallel-transport maps}\\
\text{ associated to $Q$ over $M$}
\end{array} 
\right\} \]

%\[ \left\{\text{Connections on} \ Q \right\}\leftrightarrow \left \{\text{parallel transport maps associated to }Q \text{ over }M\right \} \]

\end{thm}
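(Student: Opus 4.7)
The plan is to construct both maps of the correspondence explicitly and verify that they are mutual inverses.

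For the direction from connections to parallel transport, given a connection (i.e., a horizontal $G$-invariant distribution) $H\subset TQ$, every smooth path $\g:[0,1]\to M$ admits, for each initial point $q\in Q_{\g(0)}$, a unique horizontal lift $\tilde\g$ with $\tilde\g(0)=q$. This is a standard ODE: the projection $H\to \pi^*TM$ is a fiberwise isomorphism, so $\dot\g$ lifts to a unique time-dependent horizontal vector field whose flow exists. Define $P(\g)(q):=\tilde\g(1)$. Smooth dependence on parameters in ODEs yields smoothness in families; $G$-invariance of $H$ gives $G$-equivariance; reparametrization invariance follows because a reparametrized path has the correspondingly reparametrized horizontal lift; and juxtaposition compatibility is immediate from uniqueness of horizontal lifts, which extends across the splitting point because piecewise smooth paths are allowed.

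For the direction from parallel transport to connections, define the horizontal distribution by differentiation. For $q\in Q$ over $x\in M$ and $v\in T_xM$, pick any smooth curve $\g$ with $\g(0)=x$ and $\dot\g(0)=v$, and set
\[ \tilde v_q \;:=\; \frac{d}{dt}\bigg|_{t=0}\! P(\g|_{[0,t]})(q) \;\in\; T_qQ. \]
Smoothness of $P$ in families (applied to the family $\{\g|_{[0,t]}\}_t$) guarantees that $t\mapsto P(\g|_{[0,t]})(q)$ is a smooth curve in $Q$, so the derivative exists, and the bundle projection takes it to $\dot\g(0)=v$. The heart of the argument is to verify that $\tilde v_q$ depends only on $v$, not on the chosen representative $\g$: given two such curves $\g_0,\g_1$ agreeing to first order at $0$, use a smooth interpolating family $\{\g_s\}_{s\in[0,1]}$ and smoothness in families to produce a smooth map $(s,t)\mapsto P(\g_s|_{[0,t]})(q)$, then exploit reparametrization invariance and juxtaposition compatibility $P(\g|_{[0,t+h]})=P(\g|_{[t,t+h]})\comp P(\g|_{[0,t]})$ to reduce the difference to an $O(t^2)$ term that vanishes on differentiation at $t=0$. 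Setting $H_q:=\{\tilde v_q:v\in T_xM\}$ yields a subspace of $T_qQ$ of dimension $\dim M$, transverse to the vertical distribution and hence complementary to it. $G$-invariance of $H_q$ follows from $G$-equivariance of $P(\g)$, and smoothness of the distribution follows from smoothness of $P$ in families.

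Finally, the two constructions are mutually inverse. Starting from a connection $\na$, constructing $P$, and then differentiating as above recovers $\na$, because by construction $t\mapsto P(\g|_{[0,t]})(q)$ is the horizontal lift of $\g$, whose velocity at $t=0$ is the $\na$-horizontal lift of $v=\dot\g(0)$. Conversely, starting from $P$, building $H$, and integrating returns the parallel transport of $H$: the juxtaposition axiom shows that the curve $c(t):=P(\g|_{[0,t]})(q)$ satisfies $\dot c(t)\in H_{c(t)}$ and lies over $\g$, so by uniqueness of horizontal lifts it coincides with the lift defined by $H$, and evaluating at $t=1$ gives back $P(\g)(q)$. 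The main obstacle is the well-definedness step for $\tilde v_q$; once that is in hand, the rest is routine verification.
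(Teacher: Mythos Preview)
Your overall architecture coincides with the paper's: the connection-to-transport direction is deferred to standard ODE theory, and in the reverse direction you define $\tilde v_q=\frac{d}{dt}\big|_{t=0}P(\g|_{[0,t]})(q)$, assemble these into a $G$-invariant distribution complementary to the vertical, and then check the two constructions are mutual inverses via uniqueness of horizontal lifts. All of that matches.

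The gap is exactly at the step you yourself flag as ``the heart of the argument'': independence of $\tilde v_q$ from the representative curve $\g$. Your sketch (choose an interpolating family $\g_s$, invoke smoothness of $(s,t)\mapsto P(\g_s|_{[0,t]})(q)$, then use juxtaposition and reparametrization to get an $O(t^2)$ discrepancy) does not go through as stated. Smoothness of that two-parameter map says nothing about $\partial_t|_{t=0}$ being constant in $s$, and the juxtaposition identity $P(\g|_{[0,t+h]})=P(\g|_{[t,t+h]})\comp P(\g|_{[0,t]})$ is an identity for a \emph{single} curve; it gives no comparison between $\g_0$ and $\g_1$. No $O(t^2)$ estimate is actually produced.

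The paper handles this point by a clean one-line trick that uses only reparametrization invariance. Writing $\Phi(\a)(t):=P(\a|_{[0,t]})(q)$, consider the rescalings $\a_\e(t):=\a(\e t)$, which form a curve in $\sP(M;0,x)$ from the constant path $c_x$ to $\a$ with tangent $\{t\mapsto t\dot\a(0)\}=\{t\mapsto tv\}$ at $\e=0$. Reparametrization invariance gives $\Phi(\a_\e)(t)=\Phi(\a)(\e t)$, hence
\[
\Phi_{*c_x}\bigl(\{t\mapsto tv\}\bigr)=\Big\{t\mapsto \tfrac{d}{d\e}\Big|_{\e=0}\Phi(\a)(\e t)\Big\}=\{t\mapsto t\,\dot{\Phi(\a)}(0)\}.
\]
The left-hand side depends only on $v$, so $\dot{\Phi(\a)}(0)$ does as well. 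This replaces your interpolation/$O(t^2)$ program entirely and is the key lemma in the paper's proof.
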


\begin{proof}
It is well known how a connection gives rise to parallel transport, see for example \cite{KN1}. We only need to show how a parallel transport map $P$ associated to the bundle $Q$ over $M$ produces a connection. 

Let us begin by remarking that a parallel transport map allows us to lift paths in the manifold $M$ to paths in the bundle $Q$. More precisely, let $p\in Q$ and denote by $x$ its projection on $M$, via the bundle projection map $\pi:Q\ra M$. Let $\sP(M;0,x)$ be the space of smooth paths $\a$ in $M$ defined on a {\it fixed} (this is not a restriction on our further choices) interval $I$ containing $0$ such that $\a(0)=x$, and let $\sP(Q;0,p)$ be defined similarly. These path spaces are infinite dimensional smooth manifolds; a tangent vector to a path $\a$ in, say, $\sP(M;0,x)$ is a vector field $v$ along $\a$ in $M$, i.e. $v(t)\in T_{\a(t)}M$. Then the parallel transport map $P$ defines a smooth map 
\[ \Phi^p: \sP(M;0,x) \longrightarrow \sP(Q;0,p) \]
in an obvious way: if $\a:I\ra M$ is a path in $M$, define $\Phi^p(\a):I\ra Q$ by $ t\mapsto P(\a_{|t})(p)$, where $\a_{|t}$ is the restriction of $\a$ to the interval $[0,t]$ (or $[t,0]$ if $t<0$). In the following lemma we denote $\Phi^p$ by $\Phi$ to simplify notation, and use the dot notation for derivative.

\begin{lem}
Let $\a, \b\in \sP(M;0,x)$ such that $\dot{\a}(0)=\dot{\b}(0)=v\in T_xM$. Then $\dot{\widetilde{\Phi(\a)}}(0)=\dot{\widetilde{\Phi(\b)}}(0)$.
\end{lem}

\noindent \textit{Proof of Lemma.} Let $\a_\e:I\ra M:\ t\mapsto \a(\e t), \ 0\leq \e\leq 1$. Then $\{\a_\e\}_\e$ is a curve in $\sP(M;0,x)$ joining $c_x$, the constant path at $x\in M$, with the path $\a$. Then
\[ \frac{d}{d\e}\Big|_{\e=0} \a_\e(t)= \frac{d}{d\e}\Big|_{\e=0} \a(\e t)=t\dot{\a}(0)=tv. \]
So $\frac{d}{d\e}\big|_{\e=0} \a_\e= \{t\mapsto tv\}\in T_{c_x}\sP(M;0,x)$. Then the differential of $\Phi$ applied to this tangent vector at $c_x$ is
\[ \Phi_{*c_x}(t\mapsto tv)= \Big\{t\mapsto \frac{d}{d\e}\Big|_{\e=0} \Phi(\a_\e)(t)= \frac{d}{d\e}\Big|_{\e=0} \Phi(\a)(\e t)= t\dot{\widetilde{\Phi(\a)}}(0) \Big\} .\]
The first equality inside the braces is true since the parallel transport is \emph{invariant under reparametrization}. Similarly, 
\[  \Phi_{*c_x}(t\mapsto tv)=\{ t\mapsto t\dot{\widetilde{\Phi(\b)}}(0) \}, \]
whence the lemma.

Because of the lemma, for any tangent vector $v\in T_xM$ we can define $\ti{v}$, its lift to $p\in Q$, by
$ \ti{v}= \dot{\widetilde{\Phi(\a)}}(0)\in T_pQ$, if $v=\dot{\a}(0)$, for some $\a\in\sP(M;0,x)$. Then define 
\[ \sH_p:=\{\ti{v}\ |\ \ti{v} \text{ is the lift to $p$ of } v, \text{ for some } v\in T_xM \}. \]
Let us remark that $\sH_p$ is a linear subspace of the tangent space $T_pQ$ of $Q$ at $p$. Indeed, if we let 
\[ j:T_xM\ra T_{c_x}\sP(M;0,x): v\mapsto \{t\mapsto tv\} \]
be the inclusion of the tangent space to $M$ at $x$ into the tangent space to $\sP(M;0,x)$ at the constant path $c_x$ at $x$, then $ \Phi^p_*(j(T_xM))$ is the subspace of $T_{c_x}\sP(Q;0,p)$ consisting of tangent vectors of the form $ \{t\mapsto t\ti{v}\}$ where $\ti{v}$ is the lift of $v$, for some $ v\in T_xM$, a space which is easily identified with $\sH_p$. 

Next, let us observe that the $G$-action on the fibers of the bundle allows us to define for each group element $g\in G$ a map 
\[ R_g: \sP(Q;0,p)\ra \sP(Q;0,pg) \]
by right $g$-translation. It is then clear that
\[ \Phi^{pg}= R_g\comp\Phi^p, \]
which implies that 
\[ \sH_{pg}=R_{g*}(\sH_p). \]
In other words, the collection of spaces $\{\sH_p\}_{p\in Q}$ defines a $G$-invariant distribution on the manifold $Q$. It is also complementary to the vertical distribution on $Q$, i.e.
\[ \sH_p\oplus\sV_p= T_pQ, \text{ for all } p\in Q, \]
where $\sV_p$ is the tangent space along the fiber at $p\in Q$. This is true since $\pi_*\sH_p= T_xM$ on one side, and vertical vectors cannot be lifts of tangent vectors on the base, on the other side. The distribution $\sH=\{\sH_p\}_{p\in Q}$ therefore defines a connection on the bundle $Q$, whose parallel transport is clearly the parallel transport map we started with. The two constructions are inverses of each other and this ends the proof.

\end{proof}

A {\it homotopy-invariant} parallel transport map associated to $Q$ over $M$ is a parallel transport map $P$ which is invariant under homotopies of paths via smooth deformations with fixed endpoints. As a consequence of the theorem, we obtain the following:

\begin{cor} There is a natural 1-1 correspondence: 
\[  \left\{
\begin{array}{l}
\text{Flat connections  }\\
\text{\ \ on $Q$ over $M$}
\end{array} \right\}
\leftrightarrow \left\{
\begin{array}{l}
\text{homotopy-invariant parallel transport }\\
\text{ \ \ \  maps associated to $Q$ over $M$}
\end{array} 
\right\} \]

\end{cor}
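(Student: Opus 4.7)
The plan is to promote the bijection of the theorem to this restricted setting by showing that, under the correspondence constructed in the proof, a connection $\sH$ on $Q$ is flat if and only if its associated parallel transport map $P$ is invariant under smooth homotopies of paths with fixed endpoints. Granted the theorem, the corollary is thus reduced to checking that the \emph{flat} subclass on one side matches the \emph{homotopy-invariant} subclass on the other.

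First I would dispatch the direction (flat $\Rightarrow$ homotopy-invariant), which is classical (see, e.g., Kobayashi--Nomizu). Flatness is equivalent via the structure equation to involutivity of the horizontal distribution $\sH$. Given a smooth homotopy $H:[0,1]\times I\to M$ between paths $\g_0$ and $\g_1$ with common endpoints, one horizontally lifts the family starting from a fixed $p\in Q_{\g_0(0)}$ to a smooth map $\ti H:[0,1]\times I\to Q$. Frobenius integrability implies that the image of $\ti H$ is contained in a single horizontal leaf; since the leaves meet each fibre transversally, the curve $s\mapsto \ti H(s,1)$ lies in the fibre over $\g_0(1)=\g_1(1)$ and must be constant. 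This forces $P(\g_0)(p)=P(\g_1)(p)$, so $P$ is homotopy-invariant.

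For the converse (homotopy-invariant $\Rightarrow$ flat), I would recover the curvature from the holonomy of infinitesimal parallelograms. Fix $p\in Q$ with $x=\pi(p)$ and tangent vectors $v,w\in T_xM$. In a coordinate chart near $x$, let $\g_s$ be the (piecewise smooth) loop at $x$ obtained by traversing the parallelogram with sides $sv$ and $sw$. Each $\g_s$ contracts smoothly to the constant loop $c_x$ through the family $\{\g_{\e s}\}_{0\le \e\le 1}$ (with a mild reparametrization to remain in the piecewise smooth framework of the theorem), a basepoint-preserving homotopy with fixed endpoints. By hypothesis, $P(\g_s)=P(c_x)=1_{Q_x}$ for all small $s$. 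On the other hand, the classical holonomy--curvature formula gives $\Omega_p(\ti v,\ti w)=\lim_{s\to 0}s^{-2}\log P(\g_s)$, taken in the structure group $G$. Hence $\Omega_p=0$ for every $p\in Q$, i.e., the connection constructed in the theorem is flat.

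The main obstacle is administrative rather than conceptual: one must verify that the small parallelogram loops and the rescaling homotopies $\{\g_{\e s}\}$ are legitimate inputs for the notion of parallel transport and its homotopy invariance adopted here (piecewise smooth paths, smooth deformations with fixed endpoints). Once this is granted, the two implications together with the theorem immediately yield the claimed bijection.
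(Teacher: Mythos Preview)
Your proposal is correct and follows essentially the same strategy as the paper: invoke the main theorem and then check that, under its bijection, flatness corresponds precisely to homotopy-invariance of the associated parallel transport. The paper's own proof is even terser---it simply declares both implications classical and cites the literature (in particular \cite{GKM}) for the direction homotopy-invariant $\Rightarrow$ flat---whereas you spell out the standard arguments (Frobenius integrability for one direction, the holonomy--curvature expansion around infinitesimal parallelograms for the other).
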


\begin{proof} 
It is well-known that a bundle with flat connection gives rise to a parallel transport map that is homotopy-invariant. Conversely, given such a parallel-transport map $P$ associated to the bundle $Q$ over $M$, by the previous theorem, there is a connection on $Q$ over $M$  whose parallel transport is given by $P$. The curvature of such a connection is zero, since its parallel transport map is homotopy-invariant (see for example \cite{GKM}, Section 2.6).
\end{proof}

\noindent\emph{Acknowledgements.} The author would like to thank Frederico Xavier and Stephan Stolz for helpful discussions. We would also like to thank the reviewer(s) for their pertinent remarks and for signaling to us a vast preexisting literature on the subject.

\bibliographystyle{plain}
\bibliography{biblio}

\bigskip
\raggedright Max-Planck-Institut  f\"{u}r Mathematik\\  53111 Bonn, Germany\\
 Email: {\tt florin@mpim-bonn.mpg.de}

\end{document}